\newtheorem{theorem}{Theorem}[section]
\newtheorem{lemma}[theorem]{Lemma}
\newtheorem{remark}[theorem]{Remark}
\newtheorem{proposition}[theorem]{Proposition}
\numberwithin{theorem}{section}
\numberwithin{equation}{section}
\newcommand{\F}{\mathbb F}
\newcommand{\T}{{\mathcal{T}}}
\newcommand{\norm}[1]{\left\Vert#1\right\Vert}
\newcommand{\la}{\langle}
\newcommand{\ra}{\rangle}
\newcommand{\Comp}{\mathbb{C}}
\newcommand{\n}{\mathbb{N}}
\begin{document}

\title{Additivity violation of the regularized minimum output entropy}

\author {Beno\^\i{}t Collins and Sang-Gyun Youn}

\address{Beno\^\i{}t Collins, 
Department of Mathematics, Graduate School of Science,  
Kyoto University, Kyoto 606-8502, Japan.}
\email{collins@math.kyoto-u.ac.jp}

\address{Sang-Gyun Youn, 
Department of Mathematics Education, Seoul National University, 
Gwanak-ro 1, Gwanak-gu, Seoul, Republic of Korea 08826}
\email{s.youn@snu.ac.kr}

\maketitle

\begin{abstract}
The problem of additivity of the Minimum Output Entropy is of fundamental importance in Quantum Information Theory (QIT). 
It was solved by Hastings \cite{Ha09} in the one-shot case, by exhibiting a pair of random quantum channels. 
However, the initial motivation was arguably to understand regularized quantities and there 
was so far no way to solve additivity 
questions 
in the regularized case. The purpose of this paper is to give a solution to this problem.
Specifically, we exhibit a pair of quantum channels which unearths additivity violation of the regularized minimum output entropy. 
Unlike previously known results in the one-shot case, our construction is non-random, infinite dimensional and in the commuting-operator setup. 
The commuting-operator setup is equivalent to the tensor-product setup in the finite dimensional case
for this problem, 
but their difference in infinite dimensional setting has attracted substantial attention 
and legitimacy recently in QIT with the celebrated resolutions of Tsirelson's and Connes embedding problem \cite{JiNaViWrYu20}. 
Likewise, it is not clear that our approach works in the finite dimensional setup. 
Our strategy of proof relies on developing a variant 
of the Haagerup inequality optimized for a product of free groups. 
\end{abstract}

\section{Introduction}

A crucial problem in quantum information theory is the problem of additivity of Minimum Output Entropy (MOE), which asks whether it is possible to find two quantum 
channels $\Phi_1,\Phi_2$ such that 
$$H_{min}(\Phi_1\otimes\Phi_2) < H_{min}(\Phi_1) + H_{min}(\Phi_2).$$

This problem was stated by King-Ruskai in \cite{KiRu01} as a natural question in the study of quantum channels. 
Shor proved in 2004 \cite{Sho04a,Sho04b} that a positive answer to the above question 
is equivalent to super-additivity of the Holevo capacity, i.e. there exist quantum channels $\Phi_1,\Phi_2$ such that
$$\chi (\Phi_1\otimes\Phi_2) > \chi (\Phi_1) + \chi (\Phi_2).$$

Heuristically, super-additivity of the Holevo capacity implies that entanglement inputs can be used to increase the transmission rate of classical information. We refer to Section \ref{sec:preliminaries} for the definitions of the MOE $H_{min}$ and the Holevo capacity $\chi$.
This question attracted lots of attention, and it was eventually solved by Hastings in 2009 \cite{Ha09}, with preliminary substantial contributions by Hayden, Winter, Werner, see in particular \cite{HaWi08}.
Subsequently, the mathematical aspects of the proof have been clarified in various directions by \cite{AuSzWe11,FuKiMo10,BrHo10,BeCoNe16,Co18,CoFuZh15}.

All previously known examples of additivity violation of MOE rely on subtle random constructions. In particular, to date, no deterministic
construction of additivity violation has ever been given. For attempts and partial results in the 
direction of non-random techniques we refer to \cite{WeHo02,GrHoPa10,BrCoLeYo20}, etc. 

Note that the above results do not imply anything about the problem of the additivity of the regularized MOE 
(see Definition \ref{def-MOE} for details). Indeed, additivity violation is not known to pertain when the MOE is regularized. 
More precisely, the additivity question for the regularized MOE asks whether it is possible to find two quantum channels 
$\Phi_1,\Phi_2$ such that 
$$\overline H_{min}(\Phi_1\otimes\Phi_2) < \overline H_{min}(\Phi_1) + \overline H_{min}(\Phi_2).$$
where $\overline H_{min}$ stands for the regularized MOE. This question was raised in \cite{Fu14} and the affirmative answer to this implies superadditivity of classical capacity.

Very few results are known about regularized entropic quantities -- see for example \cite{Ki02} or \cite{BrCoLeYo20} for partial results. 
In this paper, we focus on the additivity question of the regularized minimum output entropy, and the tensor product channel will 
be understood as a composition of two quantum channels whose systems of Kraus operators are commuting 
(see Section \ref{subsec-setup} for details).

In (quantum) information theory, 
one key paradigm is to allow repeated uses of a given quantum channel. To do this, we have to analyze a physical system by 
separated subsystems. In view of quantum strategies for non-local games, there are two natural models to describe separated
subsystems. One is the {\it tensor-product model} and the other is {\it commuting-operator model}. This latter approach is the object of 
intense research, see for example \cite{PaTo15,DyPa16,ClLiSl17,Sl20,ClCoLiPa18}, 
culminating with the recent resolution \cite{JiNaViWrYu20} in the negative of the celebrated Connes Embedding problem whose origin dates back to \cite{Co76}. 
In our case, commuting 
systems of Kraus operators correspond to a commuting-operator model. We refer to Section \ref{subsec-setup} for details on this.

The main result of this paper is an explicit construction of a pair of quantum channels $\Phi_1$ and $\Phi_2$ 
which have commuting systems of Kraus operators and satisfy additivity violation of the regularized 
MOE. Specifically, our main result can be stated as follows:

\begin{theorem}
There exist systems of operators $\left \{E_i\right\}_{i=1}^m$ and $\left \{F_j\right\}_{j=1}^n$ in $B(H)$ such that
\begin{enumerate}
\item $E_iF_j=F_jE_i$ for all $1\leq i\leq m$ and $1\leq j\leq n$,
\item $\displaystyle \sum_{i=1}^m E_i^*E_i=\mathrm{Id}_H=\sum_{j=1}^n F_j^*F_j$,
\item $\Phi_1,\Phi_2:\T(H)\rightarrow \T(H)$ are quantum channels given by
\[\Phi_1(\rho)=\displaystyle \sum_{i=1}^m E_i\rho E_i^*~\mathrm{and}~\Phi_2(\rho)=\displaystyle \sum_{j=1}^n F_j\rho F_j^*,\]
\item $\overline{H}_{min}(\Phi_1\circ \Phi_2)<\overline{H}_{min}(\Phi_1)+\overline{H}_{min}(\Phi_2)$.
\end{enumerate}
\end{theorem}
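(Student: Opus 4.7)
My plan is to build both channels from the left regular representation of the direct product $\Gamma=F_m\times F_n$ of two free groups. With $F_m=\langle g_1,\ldots,g_m\rangle$, $F_n=\langle h_1,\ldots,h_n\rangle$, and $\lambda$ the left regular representation of $\Gamma$ on $H=\ell^2(\Gamma)$, I would set $E_i=m^{-1/2}\lambda(g_i,e)$ and $F_j=n^{-1/2}\lambda(e,h_j)$. Then $E_iF_j=F_jE_i$ is the commutation of the two factors of $\Gamma$, and unitarity of $\lambda$ gives $\sum_i E_i^*E_i=\sum_j F_j^*F_j=\mathrm{Id}_H$, so properties (1)--(3) are immediate. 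In particular, the composition $\Phi_1\circ\Phi_2$ becomes the random-unitary channel whose Kraus operators are the unitaries $(mn)^{-1/2}\lambda(g_ih_j)$, reducing the whole problem to noncommutative harmonic analysis on $\Gamma$.

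For (4) I would split the argument into (a) an exact computation of $\overline{H}_{\min}(\Phi_i)$ and (b) a strict one-shot upper bound on $H_{\min}(\Phi_1\circ\Phi_2)$. For (a), the nonzero spectrum of $\Phi_1^{\otimes k}(\xi\xi^*)$ coincides with that of the $m^k\times m^k$ Gram matrix of the Kraus outputs $E_I\xi$ (indexed by $I\in\{1,\ldots,m\}^k$); its top eigenvalue equals $\sup_{\|v\|_2=1}\|\sum_I v_I E_I\xi\|^2/m^k$. A Haagerup-type inequality applied to the length-$k$ Fourier sum $\sum_I v_I\,\lambda(g_{i_1})\otimes\cdots\otimes\lambda(g_{i_k})$ should yield a uniform bound $C(k)\cdot m^{-k}$ with $C(k)$ polynomial in $k$, so $H_{\min}(\Phi_1^{\otimes k})\geq k\log m-O(\log k)$. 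Dividing by $k$ and passing to the limit gives $\overline{H}_{\min}(\Phi_1)=\log m$, matching the trivial upper bound coming from $\delta_e^{\otimes k}$; symmetrically $\overline{H}_{\min}(\Phi_2)=\log n$.

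For (b), I would exhibit a pure input $\eta\in H$ that is not a product state in the $\ell^2(F_m)\otimes\ell^2(F_n)$ decomposition and that brings the quantity $\|\sum_{i,j}v_{ij}\lambda(g_ih_j)\eta\|/\|\eta\|$ close to the operator norm $\|\sum_{i,j}v_{ij}\lambda(g_ih_j)\|$ for a suitable choice of coefficients $v_{ij}$. The vacuum $\delta_{(e,e)}$ only realizes the value $\|v\|_2$, but the operator norm is strictly larger by a product-Haagerup constant, so such an $\eta$ drives the top eigenvalue of the Gram matrix of $\Phi_1\circ\Phi_2(\eta\eta^*)$ strictly above $(mn)^{-1}$. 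This forces $H_{\min}(\Phi_1\circ\Phi_2)<\log(mn)=\overline{H}_{\min}(\Phi_1)+\overline{H}_{\min}(\Phi_2)$, and (4) follows because $\overline{H}_{\min}(\Phi_1\circ\Phi_2)\leq H_{\min}(\Phi_1\circ\Phi_2)$.

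The main obstacle I anticipate is step (a): the ordinary Haagerup inequality on $\Gamma$ only tracks the total word length $|g|+|h|$, and when lifted to the tensor power $k$ the naive constants degrade exponentially, which is incompatible with the polynomial-in-$k$ growth required for the exact limit $\overline{H}_{\min}(\Phi_i)=\log m$. A refined variant of the Haagerup inequality, bigraded by the two factor-wise word lengths of $\Gamma$ and with sharp constants uniform in the tensor power, is what is required; developing that inequality and matching it with a quantitative construction of $\eta$ in step (b) is the technical heart of the proof.
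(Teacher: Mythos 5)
Your choice of commuting Kraus systems is where the argument breaks down. You realize both families inside the left regular representation of $F_m\times F_n$ on $\ell^2(F_m\times F_n)\cong \ell^2(F_m)\otimes\ell^2(F_n)$, so that $E_i=m^{-1/2}\lambda(g_i)\otimes 1$ and $F_j=n^{-1/2}\,1\otimes\lambda(h_j)$, and the composition $\Phi_1\circ\Phi_2$ is literally a tensor product of two independent free-group unitary channels. For such a channel the $mn$ Kraus unitaries $\lambda(g_i)\otimes\lambda(h_j)$ never collide on any vector, since $(g_i,h_j)^{-1}(g_{i'},h_{j'})=e$ forces $i=i'$ and $j=j'$; there is no input on which several Kraus operators act identically. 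Your step (b) must therefore manufacture the entropy deficit purely from the gap between $\|v\|_2$ and the operator norm $\|\sum_{i,j} v_{ij}\lambda(g_i)\otimes\lambda(h_j)\|\leq 4\|v\|_2$ (this bound is the $r=2$, $n=(1,1)$ case of the very product Haagerup inequality you invoke). Even in the most favorable scenario this pushes a single output eigenvalue from $1/(mn)$ up to $O(1)/(mn)$, which yields an entropy deficit of order $1/(mn)$ at best. That is too small by an order of magnitude: any Haagerup-based lower bound on $\overline{H}_{min}(\Phi_1)+\overline{H}_{min}(\Phi_2)$ loses at least order $1/m+1/n$ (the paper's own bound is $\log N-\log(1+9/N)$ per channel, not $\log N$ exactly, and your hope that polynomial-in-$k$ constants give the exact value $\log m$ is not delivered by the refined inequality either --- the constants grow like $(1+9/N)^{k/2}$). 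The two error terms are ordered the wrong way, so no violation follows. In effect your step (b) is asking for a deterministic one-shot additivity violation for a tensor product of free-group channels, which is precisely the open problem the paper is designed to circumvent.

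The paper's construction keeps a \emph{single} copy of $\ell^2(\mathbb{F}_\infty)$ and takes the two commuting families to be the left and the right regular representations of the same group: with the paper's conventions, $U_iV_j\delta_e=\delta_{g_ig_j^{-1}}$, so $U_iV_i\delta_e=\delta_e$ for every $i$. Thus $N$ of the $N^2$ Kraus operators of $\Phi_{N,l}\circ\Phi_{N,r}$ coincide on the vacuum, the output $(\Phi_{N,l}\circ\Phi_{N,r})(|e\rangle\langle e|)$ acquires a single eigenvalue $1/N$ rather than $O(1)/N^2$, and its entropy is $2\log N-\frac{\log N}{N}$. This is the genuine infinite-dimensional analogue of the Hayden--Winter maximally entangled input, and the resulting deficit $\frac{\log N}{N}$ dominates the $18/N$ slack in the Haagerup lower bound once $N>e^{18}$. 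Your lower-bound strategy in (a) is essentially the paper's (a bigraded Haagerup inequality on products of free groups, used to control the Hilbert--Schmidt distance of the complementary output from the maximally mixed state), but without replacing $\lambda\otimes\lambda$ on a product space by the pair $(\lambda,\rho)$ acting on one copy of $\ell^2(\mathbb{F}_\infty)$, step (b) cannot be completed.
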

Note that the above discussion for the regularized MOE makes sense since the given channels are generated by finitely many 
Kraus operators, and given commuting systems will be chosen as an infinite dimensional analogue of i.i.d. Haar distributed 
unitary matrices, which will be explained in Section \ref{subsec-setup} and Theorem \ref{thm:superadditive} in details. 
One of the biggest benefit from this shift in perspective is that the regularized minimum output entropy becomes computable, 
whereas for random unitary channels, computing such regularized quantities still seems to remain totally out of reach at this point.

One of the key ingredients is to extend the {\it Haagerup inequality} \cite{Ha78} to products of free groups (Proposition \ref{thm1}). This result itself is a crucial fact. Indeed, the Haagerup inequality has numerous applications in operator algebras, non-commutative harmonic analysis and geometric group theory \cite{Bo81,CaHa85,Jo89,La00,La02}.

This paper is organized as follows. After this introduction, Section \ref{sec2} gathers some preliminaries about entropic quantities, 
quantum channels and the infinite dimensional framework. Section \ref{sec3} contains the proof of a Haagerup-type inequality 
for products of free groups as well as estimates for the regularized Minimum Output Entropy of our main family of quantum channels. 
Section \ref{sec4} explains how we can obtain additivity violation of the regularized MOE in the commuting operator setup, and 
Section \ref{sec5} contains concluding remarks.

\emph{Acknowledgements}: BC. was supported by JSPS KAKENHI 17K18734 and 17H04823.
S-G. Youn was funded by Natural Sciences and Engineering Research Council of Canada and by the National Research Foundation of Korea (NRF) grant funded by the Korea government (MSIT) (No. 2020R1C1C1A01009681).
S-G. Youn acknowledges the hospitality of Kyoto University on the occasion of two visits during which this project was initiated and completed. Part of this work was also done during the conference MAQIT 2019, at which the authors acknowledge a fruitful working environment. 
Finally, both authors would like to thank Mike Brannan, Jason Crann and Hun Hee Lee for inspiring discussions on this paper.

\section{Preliminaries}\label{sec2}

\subsection{Minimum output entropy in infinite dimensional setting}\label{sec:preliminaries}

Let $V:H_A\rightarrow H_B\otimes H_E$ be an isometry. Then partial traces on $H_B$ and $H_E$ define the following 
completely positive trace preserving maps (aka \emph{quantum channels})
\begin{align}
\Phi:\T(H_A)\rightarrow \T(H_B),&~\rho\mapsto (\mathrm{id}\otimes \mathrm{tr})(V\rho V^*)\\
\Phi^c:\T(H_A)\rightarrow \T(H_E),&~\rho\mapsto (\mathrm{tr}\otimes \mathrm{id})(V\rho V^*)
\end{align}
where $\mathcal{T}(H)$ denotes the space of trace class operators on a Hilbert space $H$. The map $\Phi^c$ is called 
the {\it complementary channel} of $\Phi$. The tensor product channels $\Phi^{\otimes k}: \T(H_A^{\otimes k})\rightarrow \T(H_B^{\otimes k})$ are defined in the obvious way.
A (quantum) state in $H$ is a positive element of $\T(H)$ of trace $1$, and for a state $\rho$, its R\'enyi entropy for $p\in (1,\infty )$ is defined as
$$H^p(\rho )= \frac{1}{1-p}\log \left ( tr (\rho^p)\right ).$$
Its limit as $p\to 1^+$ is called {\it the von Neumann entropy}, and if $\lambda_1(\rho) \ge \lambda_2 (\rho)\geq \ldots $ are the eigenvalues
of $\rho$ (counted with multiplicity), then the von Neumann entropy is
$$H(\rho ) = -\sum_i \lambda_i (\rho) \log \lambda_i (\rho ).$$

The \emph{Holevo capacity} of a quantum channel is 
$$\chi (\Phi ) = \sup \left \{ H(\Phi (\sum_i \lambda_i\rho_i)-\sum_i \lambda_i H (\Phi (\rho_i)) \right\},$$
where the supremum
is taken over all probability distributions $(p_i)_i$ and all families of states $(\rho_i)_i$. 
It describes the amount of classical information that can be carried through a single use of a quantum channel. If repeated uses 
of a given quantum channel is allowed, the ultimate transmission rate of classical information is described by
\[C(\Phi)=\lim_{k\rightarrow \infty}\frac{1}{k}\chi(\Phi^{\otimes k}),\]
which is called the {\it classical capacity}.

For a quantum channel $\Phi$, \emph{the Minimum Output Entropy }(MOE) and the \emph{regularized MOE} are defined as
\begin{align}\label{def-MOE}
H_{min}(\Phi)&=\inf_{\xi}H(\Phi(|\xi\ra\la \xi |))~\mathrm{and}\\
\overline{H}_{min}(\Phi)&=\lim_{k\rightarrow \infty}\frac{1}{k}H_{min}(\Phi^{\otimes k}).
\end{align}
respectively, where the infimum runs over all unit vectors $\xi$ of $H_A$. If $\Phi$ has finitely many Kraus operators 
$\left \{E_1,E_2,\cdots,E_N\right\}$ satisfying $\Phi(\rho)=\displaystyle \sum_{i=1}^N E_i\rho E_i^*$
(e.g. if $H_E$ is finite dimensional),
then
\begin{align*}
H_{min}(\Phi)+\chi(\Phi)\leq \log(N)~~\mathrm{and}~~\overline{H}_{min}(\Phi)+C(\Phi)\leq \log(N).
\end{align*}
\begin{remark}
In Equation (\ref{def-MOE}), taking the minimum over all states instead of pure states does not modify the
quantity thanks to operator convexity of the function $x\log(x)$, see e.g. \cite{Se60,NaUm61}.
\end{remark}

As in the finite dimensional setting, the following Schmidt decomposition theorem tells us that $\Phi(|\xi\ra\la \xi |)$ and $\Phi^c(|\xi\ra\la \xi |)$  have the same eigenvalues for each pure state $|\xi\ra\la \xi|\in \T(H)$.

\begin{proposition}
Let $V:H_A\rightarrow H_B\otimes H_E$ be an isometry and $\xi\in H_A$ be a unit vector. If we suppose that $\Phi(|\xi\ra\la \xi|)$ has the spectral decomposition $\displaystyle \sum_i \lambda_i |e_i\ra\la e_i|$ with $\lambda_i>0$, where $(e_i)_{i\in I}$ is an orthonormal subset of $H_B$, then there exists an orthonormal subset $(f_i)_{i\in I}$ of $H_E$ satisfying
\begin{equation}
V|\xi\ra=\sum_i \sqrt{\lambda_i } |e_i\ra \otimes |f_i\ra~\mathrm{and~}\Phi^c(|\xi\ra\la \xi|)=\sum_i \lambda_i |f_i\ra\la f_i |.
\end{equation}
In particular, $H(\Phi(|\xi\ra\la \xi |))=H(\Phi^c(|\xi\ra\la \xi|))$ for each unit vector $\xi\in H_A$.
\end{proposition}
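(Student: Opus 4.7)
The plan is to apply a Schmidt-type decomposition to the unit vector $V\xi\in H_B\otimes H_E$ and then reconcile it with the prescribed spectral decomposition of $\Phi(|\xi\ra\la \xi|)$. Any unit vector in a tensor product of two separable Hilbert spaces admits a Schmidt decomposition
\begin{equation*}
V\xi=\sum_k s_k\,|u_k\ra\otimes|v_k\ra,
\end{equation*}
with $s_k>0$, $\sum_k s_k^2=1$, and $(u_k)$, $(v_k)$ orthonormal subsets of $H_B$ and $H_E$ respectively; this is equivalent to the singular value decomposition of the compact Hilbert--Schmidt operator naturally associated with $V\xi$, which is compact because its Hilbert--Schmidt norm equals $\|V\xi\|=1$. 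A direct calculation of the partial traces of the rank-one projection $|V\xi\ra\la V\xi|$ then gives
\begin{equation*}
\Phi(|\xi\ra\la \xi|)=\sum_k s_k^2\,|u_k\ra\la u_k|,\qquad \Phi^c(|\xi\ra\la \xi|)=\sum_k s_k^2\,|v_k\ra\la v_k|.
\end{equation*}

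Comparing with the hypothesis $\Phi(|\xi\ra\la \xi|)=\sum_i\lambda_i\,|e_i\ra\la e_i|$ and invoking the uniqueness of the spectral decomposition of a positive trace-class operator forces the multisets $\{s_k^2\}$ and $\{\lambda_i\}$ to coincide, and each $\lambda_i$-eigenspace of $\Phi(|\xi\ra\la \xi|)$ is finite dimensional since $\Phi(|\xi\ra\la \xi|)$ is trace class. Within each such eigenspace one can rewrite the corresponding block of Schmidt terms in the form $\sum_j\sqrt{\lambda_i}\,e_j\otimes f_j$ by a unitary change of basis applied simultaneously on both factors, which preserves the orthonormality of the right vectors. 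Collecting these blocks produces an orthonormal family $(f_i)_{i\in I}\subset H_E$ with
\begin{equation*}
V\xi=\sum_i\sqrt{\lambda_i}\,|e_i\ra\otimes|f_i\ra,\qquad \Phi^c(|\xi\ra\la \xi|)=\sum_i\lambda_i\,|f_i\ra\la f_i|,
\end{equation*}
and the identity $H(\Phi(|\xi\ra\la \xi|))=H(\Phi^c(|\xi\ra\la \xi|))$ is then immediate because the two density operators have identical nonzero spectra with the same multiplicities.

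The only substantive ingredient beyond the finite-dimensional argument is the existence of a Schmidt decomposition in infinite dimensions, which is a direct consequence of the spectral theorem (equivalently, the singular value decomposition) for compact operators applied to the Hilbert--Schmidt operator associated with $V\xi$. Beyond this the proof is formally identical to its finite-dimensional counterpart, so I do not anticipate a serious obstacle; the main care is in the final rebasing step, which has to match the (possibly higher-multiplicity) eigenvectors $(e_i)$ chosen in the statement with the a priori arbitrary left Schmidt vectors $(u_k)$.
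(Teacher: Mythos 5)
Your proof is correct, but it takes a genuinely different route from the paper's. You first produce an abstract Schmidt decomposition $V\xi=\sum_k s_k\,|u_k\ra\otimes|v_k\ra$ via the singular value decomposition of the Hilbert--Schmidt operator attached to $V\xi$, and only afterwards reconcile the left Schmidt vectors $(u_k)$ with the prescribed eigenvectors $(e_i)$ by a block-unitary rebasing inside each (finite-dimensional, since the eigenvalue is positive and the operator is trace class) eigenspace. The paper goes the other way around: it extends $(e_i)$ to an orthonormal basis of $H_B$, expands $V|\xi\ra=\sum_i|e_i\ra\otimes|\eta_i\ra$ with coefficients $\eta_i\in H_E$, and then reads off directly from the hypothesis $\Phi(|\xi\ra\la\xi|)=\sum_i\lambda_i|e_i\ra\la e_i|$ that $\la\eta_j|\eta_i\ra=\lambda_i\delta_{ij}$; setting $f_i=\lambda_i^{-1/2}\eta_i$ immediately gives the claimed orthonormal family. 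The paper's argument is shorter and avoids both the existence and uniqueness statements for the Schmidt/spectral decomposition and your multiplicity-matching step, because it builds the decomposition adapted to $(e_i)$ from the start; your argument buys a self-contained derivation of the Schmidt decomposition in infinite dimensions and makes explicit why the nonzero spectra of $\Phi(|\xi\ra\la\xi|)$ and $\Phi^c(|\xi\ra\la\xi|)$ agree with multiplicities, at the cost of the extra bookkeeping in the rebasing step (which you do handle correctly: a unitary change of the left factors within an eigenspace induces the conjugate unitary change on the right factors, preserving orthonormality).
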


\begin{proof}
Since $(e_i)_i$ is an orthonormal basis of $H_B$, we can write $V |\xi\ra$ as $\displaystyle \sum_i |e_i\ra\otimes |\eta_i\ra$ for a family $(\eta_i)_i \subseteq H_E$. Moreover, the given spectral decomposition of $\Phi(|\xi\ra\la \xi|)$ tells us that $\la \eta_j | \eta_i\ra=\lambda_i \delta_{i,j}$, which is equivalent to that $\displaystyle (f_i)_i:=\left ( \lambda_i^{-\frac{1}{2}}\eta_i \right )_i $ is an orthonormal set. Then we have
\[V|\xi\ra= \sum_i \sqrt{\lambda_i} |e_i\ra\otimes |f_i\ra~\mathrm{and~}\Phi^c(|\xi\ra\la \xi|)=\sum_i \lambda_i |f_i\ra\la f_i |.\]
\end{proof}

\subsection{Commuting systems of Kraus operators}\label{subsec-setup}

Let $H$ be a Hilbert space and $(a_{ij})_{(i,j)\in I\times J}$ be a family of bounded operators in $B(H)$ satisfying $\displaystyle \sum_{i\in I}a_{i,j}^*a_{i,j}=\mathrm{Id}_H$ for each $j\in J$. We assume that $I$ is finite and $J$ is arbitary. 
Let us define a family of quantum channels $(\Phi_j)_{j\in J}$ by
\[\Phi_j : \mathcal{T}(H)\to \T(H),~X\mapsto \sum_{i\in I} a_{ij}Xa_{ij}^*.\]
Their complement channels are given by
\[\Phi_j^c: \T(H)\to M_{|I|}(\Comp),~X\mapsto \sum_{i,i'\in I}\mathrm{tr} (a_{ij}Xa_{i'j}^*)|i\ra\la i'|.\]

We say that $(\Phi_j)_{j\in J}$ is in {\it commuting-operator setup} if the given channels $\Phi_j$ have commuting systems of 
Kraus operators in the sense that $a_{ij}a_{i'j'}=a_{i'j'}a_{ij}$ for any $i,i'\in I$ and $j,j'\in J$ such that $j\neq j'$.

 An example of this is the {\it tensor-product setup} but it is not the only example. A property is that $\Phi_j$ and $\Phi_{j'}$ commute 
 and their products are again quantum channels.
If $J$ is finite and $\Phi_1,\cdots,\Phi_{|J|}$ are in commuting-operator setup, then it is  natural to 
ask whether the following additivity property holds when $F$ is one of $H_{min},\overline{H}_{min},\chi,C$:
\[F\left (\prod_{j\in J}\Phi_j\right )=\sum_{j\in J}F(\Phi_j).\]

In particular, in the case $|J|=2$, the product channel $\Phi_1\circ \Phi_2$ is called a {\it local map} in the context of \cite{CrKrLeTo19}.

Let us construct a non-trivial quantum channel within the commuting-operator setup from the view of abstract harmonic analysis 
and operator algebra. Let $\mathbb{F}_{\infty}$ be the free group whose generators are $g_1,g_2,\cdots$ and let us define 
unitary operators $U_i$ and $V_j$ on $\ell^2(\mathbb{F}_{\infty})$ by
\[(U_if)(x)=f(g_i^{-1}x)~\mathrm{and~}(V_j f)(x)=f(xg_j)\]
for any $f\in \ell^2(\F_{\infty})$, $x\in \mathbb{F}_{\infty}$ and $i,j\in \n$. Since $U_iV_j=V_jU_i$ for all $i,j\in \n$, we have the 
following quantum channels that have commuting systems of Kraus operators.
\begin{align*}
\Phi_{N,l}:\T(\ell^2(\F_{\infty}))\rightarrow\T(\ell^2(\F_{\infty})),&~\rho\mapsto \frac{1}{N}\sum_{i=1}^N U_i\rho U_i^*~\mathrm{and}\\
\Phi_{N,r}:\T(\ell^2(\F_{\infty}))\rightarrow\T(\ell^2(\F_{\infty})),&~\rho\mapsto \frac{1}{N}\sum_{j=1}^N V_j\rho V_j^*.
\end{align*}

Let $J\in B(\ell^2(\F_{\infty}))$ be a unitary given by
\[(Jf)(x)=f(x^{-1})\]
for any $f\in \ell^2(\F_{\infty})$ and $x\in \F_{\infty}$. Then, since $JU_iJ=V_i$ and $J^2=\mathrm{Id}$, the above channels 
$\Phi_{N,l}$ and $\Phi_{N,r}$ are equivalent in the sense that
\[\Phi_{N,r}(\rho)=J\Phi_{N,l}(J\rho J)J\]
for any $\rho\in \T(\ell^2(\F_{\infty}))$. In particular, $H_{min}(\Phi_{N,l}^{\otimes k})=H_{min}(\Phi_{N,r}^{\otimes k})$ for any $k\in \n$.

Also, in order to express $k$-fold tensor product quantum channels $\Phi_{N,l}^{\otimes k}$, we will use the following notation
\[U_{m}= U_{m_1}\otimes U_{m_2}\otimes \cdots \otimes U_{m_k}\in B(\ell^2(\F_{\infty}^k))\]
for any $m=(m_1,\cdots,m_k)\in I^k$, where $I=\left \{1,2,\cdots,N\right\}$.

\section{Generalized Haagerup inequality and regularized MOE}\label{sec3}

In this section, we prove that the {\it Haagerup inequality} extends naturally to $r$-products of free groups $\F_{\infty}^r$.
Then we explain how this generalization allows to prove lower bounds of the regularized minimum output entropies (MOE) 
for $\Phi_{N,l}$. Let us simply write $\Phi_{N,l}$ as $\Phi_N$ in this section.

\subsection{A generalized Haagerup inequality}

For $x$ in the free group $\mathbb{F}_{\infty}$, we call $|x|$ its reduced word length with respect to the canonical generators
and their inverses. 
We consider products of free groups $\mathbb{F}_{\infty}^r$ for any $r\in \n$. 
Let us use the following notations $E_j=\left \{x\in \F_{\infty}:|x|=j\right\}$ for any $j\in \n_0$ and 
$E_m=E_{m_1}\times E_{m_2}\times \cdots \times E_{m_r}\subseteq \F_{\infty}^r$ for any $m=(m_1,\cdots,m_r)\in \n_0^r$.

We view $\mathbb{F}_{\infty}^r$ as an orthonormal basis that generates the Hilbert space $\ell^2(\F_{\infty}^r)$.
As an algebraic vector space, $\mathbb{F}_{\infty}^r$ spans $\mathbb{C}[\mathbb{F}_{\infty}^r]$, on which we may define
the convolution $f*g$ and the pointwise product $f\cdot g$. For $A\subset \mathbb{F}_{\infty}^r$,
$\chi_{A}$ denotes the indicator function of $A$.
First of all, we can generalize Lemma 1.3 of \cite{Ha78} as follows:

\begin{lemma}\label{lem1}

Let $l,m,k\in \n_0^r$ and let $f,g$ be supported in $E_k$ and $E_l$ respectively. Then 
\begin{equation}\label{eq1}
\norm{(f*g)\cdot \chi_{E_m}}_{\ell^2(\F_{\infty}^r)}\leq \norm{f}_{\ell^2(\F_{\infty}^r)}\cdot \norm{g}_{\ell^2(\F_{\infty}^r)}
\end{equation}
if $|k_j-l_j|\leq m_j\leq k_j+l_j$ and $k_j+l_j-m_j$ is even for all $1\leq j\leq r$. Otherwise, we have $\norm{(f*g)\cdot \chi_{E_m}}_{\ell^2(\F_{\infty}^r)}=0$.
\end{lemma}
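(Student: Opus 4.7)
The plan is to push Haagerup's original single-variable argument through coordinate by coordinate, exploiting the fact that on $\F_{\infty}^r$ both the group operation (hence convolution) and the word-length function factor through the $r$ components. Consequently the head/tail cancellation picture that underlies Lemma 1.3 of \cite{Ha78} is just the tensor of the single-variable one, and the support and parity conditions arise coordinate-wise.

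First I would dispose of the vanishing statement. If some coordinate $j$ violates $|k_j-l_j|\le m_j\le k_j+l_j$ or the parity condition, then for every $x\in E_m$ and every $y\in E_k$ the $j$-th component $y_j^{-1}x_j$ cannot lie in $E_{l_j}$, by the single-variable case of the lemma. Hence every term of $(f*g)(x)=\sum_y f(y)g(y^{-1}x)$ vanishes, and so does $\norm{(f*g)\chi_{E_m}}_{\ell^2(\F_{\infty}^r)}$.

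Assume now that all coordinate conditions hold, and set $s_j := (k_j+l_j-m_j)/2$, so that $0\le s_j\le\min(k_j,l_j)$, and $s := (s_1,\dots,s_r)$. For each $x\in E_m$ the factorisation $x_j = a_j c_j$ with $|a_j|=k_j-s_j$ and $|c_j|=l_j-s_j$ is unique, and the pairs $(y,z)\in E_k\times E_l$ with $yz=x$ are parametrised by $b\in E_s$ via $y_j = a_j b_j$ and $z_j = b_j^{-1} c_j$, subject to each $a_j b_j$ and each $b_j^{-1} c_j$ being reduced. Applying Cauchy--Schwarz to the sum over $b$ and then summing over $x\in E_m$, while overcounting by dropping the reducedness constraint between $a$ and $c$, yields
\[
\norm{(f*g)\chi_{E_m}}_{\ell^2(\F_{\infty}^r)}^2 \;\le\; \Bigl(\sum_{(a,b)}|f(ab)|^2\Bigr)\Bigl(\sum_{(b,c)}|g(b^{-1}c)|^2\Bigr),
\]
where the right-hand sums range over arbitrary $(a,b)\in E_{k-s}\times E_s$ and $(b,c)\in E_s\times E_{l-s}$.

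Finally, any non-reduced concatenation $ab$ has reduced length strictly less than $k_j$ in at least one coordinate and so lies outside $E_k$, where $f$ is supported; such terms contribute zero and $\sum_{(a,b)}|f(ab)|^2=\norm{f}_{\ell^2(\F_{\infty}^r)}^2$, and identically for $g$, which closes the estimate. The only non-cosmetic step is the bookkeeping that reducedness in $\F_{\infty}^r$ amounts to coordinate-wise reducedness, so that the argument genuinely decouples into $r$ independent copies of the single-variable one with no cross-interaction; I expect this to be the main, though quite mild, obstacle.
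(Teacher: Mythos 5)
Your proof is correct. The mechanism at its heart --- factor each $x\in E_m$ uniquely as $x=ac$ with $a\in E_{k-s}$ and $c\in E_{l-s}$, parametrize the decompositions $yz=x$ by the cancelling middle segment $b\in E_s$, apply Cauchy--Schwarz over $b$, and then observe that $\sum_{a,b}|f(ab)|^2=\norm{f}_{\ell^2(\F_{\infty}^r)}^2$ because non-reduced concatenations fall outside the support of $f$ --- is exactly the content of the paper's auxiliary functions $F$ and $G$. The organizational difference is that you run this argument directly in all $r$ coordinates at once, whereas the paper inducts on $r$: it invokes Haagerup's Lemma 1.3 verbatim as the base case, and in the inductive step it must distinguish the case $m_j=k_j+l_j$ for some $j$ (unique factorization in that coordinate, then the induction hypothesis on the remaining ones) from the case of cancellation in every coordinate (where the $F,G$ construction reduces matters back to the first case). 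Your direct version avoids both the induction and the case split, at the cost of re-deriving the $r=1$ computation rather than citing it; what it buys is a single uniform estimate and a transparent demonstration that the geometry decouples coordinate-wise. The one point worth making fully explicit in a written-up version is that the map $(a,b)\mapsto ab$ from reduced pairs in $E_{k-s}\times E_s$ onto $E_k$ is a bijection, so that $\sum_{(a,b)}|f(ab)|^2$ counts each point of the support of $f$ exactly once; you use this implicitly in your final identification of the two sums with $\norm{f}_{\ell^2(\F_{\infty}^r)}^2$ and $\norm{g}_{\ell^2(\F_{\infty}^r)}^2$.
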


\begin{proof}

Let us suppose that $|k_j-l_j|\leq m_j \leq k_j+l_j$ and $k_j+l_j-m_j$ is even for all $1\leq j\leq r$. If not, it is not difficult to see that $(f*g)\chi_m=0$. Also, it is enough to suppose that $f,g$ are finitely supported since $(f,g)\mapsto (f*g)\cdot \chi_{E_m}$ is bilinear. 

Let us use the induction argument with respect to $r\in \n$. The first case $r=1$ follows from \cite[Lemma 1.3]{Ha78} and let us suppose that (\ref{eq1}) holds true for $\mathbb{F}_{\infty}^r$. Under the notation $m=(m_0,m')\in \n_0^{r+1}$, we have
\begin{align}
&\norm{ (f*g)\chi_{E_{m}} }_{\ell^2(\mathbb{F}_{\infty}^{r+1})}^2=\sum_{s \in E_{m} }\left | \sum_{t,u\in \mathbb{F}_{\infty}^{r+1}:tu=s}  f(t)g(u) \right |^2\\
&=\sum_{s_0\in E_{m_0}}\sum_{s' \in E_{m'} }\left | \sum_{t_0,u_0\in \mathbb{F}_{\infty}:t_0 u_0=s_0}\sum_{t',u'\in \mathbb{F}_{\infty}^{r}:t'u'=s'}  f(t_0,t')g(u_0,u') \right |^2.
\end{align}

First of all, if we suppose that $m_0=k_0+l_0$, then we have
\begin{align*}
&=\sum_{s_0\in E_{m_0}}\sum_{s' \in E_{m'} } \sum_{t_0,u_0\in \mathbb{F}_{\infty}:t_0 u_0=s_0} \left | \sum_{t',u'\in \mathbb{F}_{\infty}^{r}:t'u'=s'}  f(t_0,t')g(u_0,u') \right |^2
\end{align*}
since there is a unique choice of $t_0\in E_{k_0}$ and $u_0\in E_{l_0}$ satisfying $t_0u_0=s_0$. Let us define functions $f_{t_0}(t')=f(t_0,t')$ and $g_{u_0}(u')=g(u_0,u')$ on $\mathbb{F}_{\infty}^r$. Then the above is written as
\begin{align}
\norm{ (f*g)\chi_{E_{m}} }_{\ell^2(\mathbb{F}_{\infty}^{r+1})}^2 = \sum_{s_0\in E_{m_0}}  \sum_{t_0,u_0\in \mathbb{F}_{\infty}:t_0 u_0=s_0} \norm{(f_{t_0}*g_{t_0})\chi_{E_{m'}}}_{\ell^2(\mathbb{F}_{\infty}^r)}^2,
\end{align}
which is dominated by
\begin{align}
&\leq \sum_{s_0\in E_{m_0}}  \sum_{t_0,u_0\in \mathbb{F}_{\infty}:t_0 u_0=s_0} \norm{f_{t_0}}_{\ell^2(\mathbb{F}_{\infty}^r)}^2\norm{g_{u_0}}_{\ell^2(\mathbb{F}_{\infty}^r)}^2\\
&\leq \sum_{t_0\in E_{k_0}}\sum_{u_0\in E_{l_0}} \norm{f_{t_0}}_{\ell^2(\mathbb{F}_{\infty}^r)}^2\norm{g_{u_0}}_{\ell^2(\mathbb{F}_{\infty}^r)}^2=\norm{f }_{\ell^2(\mathbb{F}_{\infty}^{r+1})}^2\norm{g }_{\ell^2(\mathbb{F}_{\infty}^{r+1})}^2.
\end{align}
Here, the first inequality comes from the induction hypothesis. Furthemore, the same idea applies whenever $m_j=k_j+l_j$ for some $0\leq j\leq r$. Now, let us suppose that $m_j<k_j+l_j$ and put $q_j=\displaystyle \frac{k_j+l_j-m_j}{2}$ for all $0\leq j\leq r$. Also, denote by $q=(q_0,\cdots,q_{r})$ and define two functions $F$ and $G$ on $\mathbb{F}_{\infty}^{r+1}$ as follows:
\begin{align}
F(x)&=\left \{ \begin{array}{cc}\left ( \sum_{v\in E_q}\left | f(xv) \right |^2 \right )^{\frac{1}{2}}&\text{for any }x\in E_{k-q} \\ 0 & \text{otherwise}\end{array} \right .\\
G(y)&=\left \{ \begin{array}{cc}\left ( \sum_{v\in E_q}\left | g(v^{-1}y) \right |^2 \right )^{\frac{1}{2}}&\text{for any }y\in E_{l-q} \\ 0 & \text{otherwise}\end{array} \right .
\end{align}
Note that $F$ and $G$ are supported in $E_{k-q}$ and $E_{l-q}$ respectively with
\begin{align}
\norm{F}_{\ell^2(\mathbb{F}_{\infty}^{r+1})}^2&=\sum_{x\in E_{k-q}}\sum_{v\in E_q}|f(xv)|^2=\norm{f}_{\ell^2(\mathbb{F}_{\infty}^{r+1})}^2\text{ and}\\
\norm{G}_{\ell^2(\mathbb{F}_{\infty}^{r+1})}^2&=\sum_{v\in E_q} \sum_{y\in E_{l-q}} |g(v^{-1}y)|^2=\norm{g}_{\ell^2(\mathbb{F}_{\infty}^{r+1})}^2.
\end{align}

Then we can show that the convolution $F*G$ dominates $|f*g|$ on $E_m$. Indeed, for any $s\in E_m$, there exists a unique $(x,y)\in E_{k-q}\times E_{l-q}$ such that $s=xy$ and we have
\begin{align*}
&|(f*g)(s)|=\left | \sum_{t\in E_k,u\in E_l: tu=s}f(t)g(u) \right |=\left | \sum_{v\in E_q}f(xv)g(v^{-1}y) \right |\\
&\leq \left ( \sum_{v\in E_q} | f(xv)|^2 \right )^{\frac{1}{2}}\left (\sum_{v\in E_q}|g(v^{-1}y)|^2 \right )^{\frac{1}{2}}=F(x)G(y)=(F*G)(s)
\end{align*}

Finally, since $F$ and $G$ are supported in $E_{k-q}$ and $E_{l-q}$ respectively with $m=(k-q)+(l-q)$, we can conclude that
\begin{align*}
\norm{(f*g)\chi_{E_m}}_{\ell^2(\mathbb{F}_{\infty}^{r+1})}&\leq \norm{(F*G)\chi_{E_m}}_{\ell^2(\mathbb{F}_{\infty}^{r+1})}\\
&\leq \norm{F}_{\ell^2(\mathbb{F}_{\infty}^{r+1})}\norm{G}_{\ell^2(\mathbb{F}_{\infty}^{r+1})}= \norm{f}_{\ell^2(\mathbb{F}_{\infty}^{r+1})}\norm{g}_{\ell^2(\mathbb{F}_{\infty}^{r+1})}.
\end{align*}

\end{proof}

Then we can generalize the Haagerup inequality to products of free groups $\F_{\infty}^r$ as follows:

\begin{proposition}\label{thm1}
Let $n=(n_1,\cdots,n_r)\in \n_0^r$ and $f$ be supported on $E_n\subseteq \F_{\infty}^r$. Then
\[\left \| L_f\right \|\leq (n_1+1)\cdots (n_r+1)\norm{f}_{\ell^2(\F_{\infty}^r)},\]
where $L_f$ is the convolution operator on $\ell^2(\F_{\infty}^r)$ given by $g\mapsto f*g$.
\end{proposition}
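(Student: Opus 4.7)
The plan is to mimic Haagerup's original argument for $\F_{\infty}$ and replace the one-dimensional Haagerup lemma (\cite[Lem.~1.3]{Ha78}) with the multi-index version established in Lemma \ref{lem1}. By density and continuity, it suffices to show that $\|f*g\|_{\ell^2(\F_{\infty}^r)} \le (n_1+1)\cdots(n_r+1)\|f\|_{\ell^2}\|g\|_{\ell^2}$ for $f$ supported on $E_n$ and $g$ finitely supported in $\ell^2(\F_{\infty}^r)$.

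First I would decompose $g = \sum_{m\in\N_0^r} g_m$ with $g_m := g\cdot\chi_{E_m}$, noting $\|g\|_2^2 = \sum_m\|g_m\|_2^2$, and write for each $l\in\N_0^r$
$$(f*g)\cdot\chi_{E_l} = \sum_{m\in\N_0^r} (f*g_m)\cdot\chi_{E_l}.$$
By Lemma \ref{lem1}, the summand vanishes unless $m$ lies in the set
$$S_l := \bigl\{m\in\N_0^r : |n_j-l_j|\le m_j\le n_j+l_j \text{ and } n_j+m_j-l_j \text{ even for all } j\bigr\}.$$
For each coordinate $j$, the admissible $m_j$ form an arithmetic progression of common difference $2$ running from $|n_j-l_j|$ to $n_j+l_j$, i.e.\ $\min(n_j,l_j)+1\le n_j+1$ values. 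Hence $|S_l|\le \prod_j(n_j+1)$, and a symmetric count shows that for each fixed $m$, the set $\{l:m\in S_l\}$ also has cardinality at most $\prod_j(n_j+1)$.

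Next I would apply Cauchy--Schwarz to the finite sum over $S_l$ and invoke Lemma \ref{lem1} termwise:
$$\|(f*g)\chi_{E_l}\|_2^2 \le |S_l|\sum_{m\in S_l}\|(f*g_m)\chi_{E_l}\|_2^2 \le \prod_{j=1}^r(n_j+1)\cdot\|f\|_2^2\sum_{m\in S_l}\|g_m\|_2^2.$$
Summing over $l$ and swapping the order of summation,
$$\|f*g\|_2^2 = \sum_l \|(f*g)\chi_{E_l}\|_2^2 \le \prod_j(n_j+1)\,\|f\|_2^2\sum_m \#\{l:m\in S_l\}\,\|g_m\|_2^2 \le \Bigl(\prod_j(n_j+1)\Bigr)^2\|f\|_2^2\|g\|_2^2.$$
Taking square roots yields the asserted norm bound on $L_f$.

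The only nontrivial step is the verification of the two counting bounds $|S_l|\le\prod_j(n_j+1)$ and $\#\{l:m\in S_l\}\le\prod_j(n_j+1)$, which hold coordinatewise by the parity/triangle constraints built into $E_n$ in the free group. The rest is a routine twofold Cauchy--Schwarz plus Lemma \ref{lem1}, and the argument separates completely across the $r$ factors thanks to the product structure of $E_n$, which is precisely why the product of the individual $(n_j+1)$ factors, rather than a larger quantity, shows up in the end.
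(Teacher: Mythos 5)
Your proof is correct and follows essentially the same route as the paper's: decompose $g$ along the word-length strata $E_m$, apply Lemma \ref{lem1} to each piece, and use the coordinatewise count $\min(n_j,\cdot)+1\leq n_j+1$ twice via Cauchy--Schwarz (once over the admissible input levels for fixed output level, once over the multiplicity of each input level), yielding the factor $\prod_j(n_j+1)^2$ before taking square roots. The paper organizes the second count as a reindexing $k_j=m_j+n_j-2l_j$ and an exchange of summation order, but this is the same bookkeeping as your multiplicity bound $\#\{l:m\in S_l\}\leq\prod_j(n_j+1)$.
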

\begin{proof}
By density arguments, we may assume that $f$ is finitely supported and it is enough to consider finitely supported functions to evaluate the norm of the associated convolution operator $L_f$. Let $g\in \ell^2(\F_{\infty}^r)$ be finitely supported and define $g_k=g\cdot \chi_{E_k}$ for each $k\in \n_0^r$. Then $g=\displaystyle \sum_{k\in \n_0^r}g\cdot \chi_{E_k}$ and we have
\[h:=f*g=\sum_{k\in \n_0^r}f*g_k.\]
Then, by Lemma \ref{lem1}, we have the following estimate for $h_m=h\cdot \chi_{E_m}$ with $m=(m_1,\cdots,m_r)\in \n_0^r$ as follows:
\begin{align*}
&\norm{h_m}_{\ell^2(\F_{\infty}^r)}=\norm{\sum_{k\in \n_0^r}(f*g_k)\cdot \chi_{E_m}}_{\ell^2(\F_{\infty}^r)}\\
&\leq \sum_{k\in \n_0^r}\norm{(f*g_k)\cdot \chi_{E_m}}_{\ell^2(\F_{\infty}^r)}\leq  \sum_{\substack{k\in \n_0^r\\ n_j+k_j-m_j:even\\ |n_j-k_j|\leq m_j\leq n_j+k_j}}\norm{f}_{\ell^2(\F_{\infty}^r)} \norm{g_k}_{\ell^2(\F_{\infty}^r)} =:A
\end{align*}
Writing $k_j=m_j+n_j-2l_j$ for all $1\leq j\leq r$, we obtain
\begin{align*}
A&=\norm{f}_{\ell^2(\F_{\infty}^r)}\sum_{\substack{l_1,\cdots,l_r\\ 0\leq l_j\leq \min\left \{m_j,n_j \right\}}}\norm{g_{m+n-2l}}_{\ell^2(\F_{\infty}^r)}\\
\leq& \norm{f}_{\ell^2(\F_{\infty}^r)} \sqrt{(1+n_1)\cdots (1+n_r)} \left ( \sum_{\substack{l_1,\cdots,l_r\\ 0\leq l_j\leq \min\left \{m_j,n_j \right\}}}\norm{g_{m+n-2l}}_{\ell^2(\F_{\infty}^r)}^2 \right )^{\frac{1}{2}}.
\end{align*}
Therefore,
\begin{align*}
&\norm{h}_{\ell^2(\F_{\infty}^r)}^2=\sum_{m\in \n_0^r}\norm{h_m}_{\ell^2(\F_{\infty}^r)}^2\\
&\leq (1+n_1)\cdots (1+n_r)\norm{f}_{\ell^2(\F_{\infty}^r)}^2\sum_{m\in \n_0^r}\sum_{\substack{l_1,\cdots,l_r\\ 0\leq l_j\leq \min\left \{m_j,n_j \right\}}}\norm{g_{m+n-2l}}_{\ell^2(\F_{\infty}^r)}^2\\
&= (1+n_1)\cdots (1+n_r)\norm{f}_{\ell^2(\F_{\infty}^r)}^2\sum_{\substack{l_1,\cdots,l_r\\ 0\leq l_j\leq n_j}}\sum_{\substack{m_1,\cdots,m_r\\ l_j\leq m_j<\infty}}\norm{g_{m+n-2l}}_{\ell^2(\F_{\infty}^r)}^2\\
&= (1+n_1)\cdots (1+n_r)\norm{f}_{\ell^2(\F_{\infty}^r)}^2\sum_{\substack{l_1,\cdots,l_r\\ 0\leq l_j\leq n_j}}\sum_{\substack{k_1,\cdots,k_r\\ n_j-l_j\leq k_j<\infty}}\norm{g_{k}}_{\ell^2(\F_{\infty}^r)}^2\\
&\leq  (1+n_1)\cdots (1+n_r)\norm{f}_{\ell^2(\F_{\infty}^r)}^2\sum_{\substack{l_1,\cdots,l_r\\ 0\leq l_j\leq n_j}}\sum_{k\in \n_0^r}\norm{g_{k}}_{\ell^2(\F_{\infty}^r)}^2\\
&=(1+n_1)^2\cdots (1+n_r)^2\norm{f}_{\ell^2(\F_{\infty}^r)}^2 \norm{g}_{\ell^2(\F_{\infty}^r)}^2,
\end{align*}
which gives us
\[\norm{L_f}\leq (1+n_1)\cdots (1+n_r)\norm{f}_{\ell^2(\F_{\infty}^r)}.\]
\end{proof}

\subsection{A  norm estimate}

In this subsection, we investigate  the operator norm of the following elements
$$\sum_{v,w\in E^k} a_{v,w}(U_{v})^*U_{w} ~\mathrm{with~}a_{vw}\in \Comp,$$ 
where $E=\left \{g_1,g_2,\cdots\right\}$ is the set of generators of $\mathbb{F}_{\infty}$ and $E^k=E\times \cdots \times E\subseteq E_{(1,\cdots,1)}$. 
Indeed, in Section \ref{sec-reg.MOE}, 
this estimate will be needed to evaluate the regularized MOE.
Our estimate is as follows:

\begin{theorem}\label{thm2} 
For any $a=(a_{vw})_{v,w\in E^k}\in M_{N^k}(\Comp)$ such that $\mathrm{tr}(a)=0$, we have
$$\left \| \sum_{v,w\in E^k} a_{vw}(U_{v})^*U_{w}\right \| \le N^{\frac{k}{2}}\sqrt{(1+9N^{-1})^k-1}\left \|a \right \|_2.$$
\end{theorem}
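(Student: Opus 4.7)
The idea is to express
\[T := \sum_{v,w \in E^k} a_{vw}(U_v)^* U_w\]
as a convolution operator $L_f$ on $\ell^2(\F_\infty^k)$ and then apply the generalized Haagerup inequality from Proposition \ref{thm1}. Since each $U_i$ is the left translation operator associated with the generator $g_i$, one has $(U_v)^* U_w = \lambda(v^{-1}w)$ on $\ell^2(\F_\infty^k)$, so $T = L_f$ where
\[f(x) = \sum_{(v,w) \in E^k \times E^k,\ v^{-1}w = x} a_{vw}.\]
For $(v,w) \in E^k \times E^k$, the $j$-th coordinate of $v^{-1}w$ is trivial exactly when $v_j = w_j$ and has reduced length $2$ otherwise. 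Hence $f$ is supported on the disjoint union of the sets $E_{\tau_S}$, where $S$ ranges over subsets of $\{1,\ldots,k\}$ and $\tau_S \in \n_0^k$ is defined by $(\tau_S)_j = 2$ for $j \in S$ and $(\tau_S)_j = 0$ for $j \notin S$.

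Next I would decompose $f = \sum_S f_S$ with $f_S := f \cdot \chi_{E_{\tau_S}}$, and observe that the trace-zero hypothesis $\mathrm{tr}(a)=0$ kills the $S = \emptyset$ piece: $f_\emptyset$ is supported at the identity and equals $\sum_v a_{vv}$. For each non-empty $S$, Proposition \ref{thm1} applied with $n = \tau_S$ gives
\[\norm{L_{f_S}} \leq \prod_{j=1}^k \bigl((\tau_S)_j+1\bigr) \cdot \norm{f_S}_{\ell^2(\F_\infty^k)} = 3^{|S|}\norm{f_S}_{\ell^2(\F_\infty^k)}.\]
Parametrizing $x \in E_{\tau_S}$ by the unique pair $(v|_S, w|_S)$ with $v_j \neq w_j$ for $j \in S$, I would rewrite
\[f_S(x) = \sum_{r \in E^{|S^c|}} a_{(v|_S,r),(w|_S,r)},\]
and Cauchy--Schwarz in the $r$-variable gives $\norm{f_S}_{\ell^2}^2 \leq N^{|S^c|} b_S^2$, where
\[b_S^2 := \sum_{(v,w) \in D_S} |a_{vw}|^2, \qquad D_S := \{(v,w) : v_j = w_j \text{ for } j \notin S,\ v_j \neq w_j \text{ for } j \in S\}.\]

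The final step is to combine everything via the triangle inequality and a second Cauchy--Schwarz, this time over $S$, with weights chosen so the geometric sum telescopes. Since $\{D_S\}_S$ partitions $E^k \times E^k$ one has $\sum_S b_S^2 = \norm{a}_2^2$, and the binomial theorem gives $\sum_{S \subseteq \{1,\ldots,k\}} 9^{|S|} N^{|S^c|} = (N+9)^k$. Hence
\begin{align*}
\norm{T} &\leq \sum_{\emptyset \neq S} 3^{|S|} \norm{f_S}_{\ell^2} \leq \sum_{\emptyset \neq S} 3^{|S|} N^{|S^c|/2} b_S \\
&\leq \biggl(\sum_{\emptyset \neq S} 9^{|S|} N^{|S^c|}\biggr)^{\!1/2}\cdot \norm{a}_2 = \sqrt{(N+9)^k - N^k}\,\norm{a}_2,
\end{align*}
which equals $N^{k/2}\sqrt{(1+9N^{-1})^k - 1}\,\norm{a}_2$, as required.

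The only conceptual obstacle is identifying the correct decomposition $f = \sum_S f_S$ indexed by the ``disagreement pattern'' $S \subseteq \{1, \ldots, k\}$, which is the refinement that lets one invoke Proposition \ref{thm1} coordinate-wise with the sharp constant $3^{|S|}$. The delicate quantitative point is the choice of Cauchy--Schwarz weights $3^{|S|} N^{|S^c|/2}$: the subtracted $N^k$ in the final bound -- which is precisely what makes the estimate nontrivial in the large-$N$ regime relevant for regularized MOE additivity violation in the next subsection -- comes entirely from the exclusion of $S = \emptyset$, i.e.\ from the trace-zero hypothesis on $a$.
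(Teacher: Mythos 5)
Your proposal is correct and follows essentially the same route as the paper: decompose by the agreement/disagreement pattern $S\subseteq\{1,\dots,k\}$, use $\mathrm{tr}(a)=0$ to kill the diagonal piece, invoke Proposition \ref{thm1} with constant $3^{|S|}$, and finish with two applications of Cauchy--Schwarz to get $\sqrt{(N+9)^k-N^k}\,\norm{a}_2$. The only (cosmetic) difference is that you absorb the sum over the common coordinates into a single function $f_S$ supported on $E_{\tau_S}$ and pay the factor $N^{|S^c|/2}$ when bounding $\norm{f_S}_2$, whereas the paper fixes the common part $z$ first and pays the same factor via Cauchy--Schwarz over the $N^{s}\binom{k}{s}$ choices of $(K,z)$.
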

\begin{proof}
Now we are summing over $(N^k)^2$ elements and we have to split the sum according to whether there are simplification or not. To do this, for any subset $K$ of $\left \{1,2,\cdots,k\right\}$, we define
\[E_K=\left \{(v,w)\in E^k\times E^k:v_i=w_i~\mathrm{for~all~}i\in K\right\}.\]

Note that $\displaystyle E^2=\sqcup_{K\subset \left \{1,\cdots,k\right\}}E_K$. Then, by the triangle inequality, we have
\begin{align*}
\norm{\sum_{v,w\in E^2}a_{vw} (U_v)^* U_w }&\le \sum_{K\subset \left \{1,\cdots,k\right\}}\norm{\sum_{(v,w)\in E_K} a_{vw} (U_v)^* U_w }\\
&=\sum_{s=0}^k \sum_{\substack{K\subset \left \{1,\cdots,k\right\}\\ |K|=s }}\norm{\sum_{(v,w)\in E_K} a_{vw} (U_v)^* U_w }.
\end{align*}

Note that $|K|=k$ implies $\displaystyle \sum_{(v,w)\in E_K}a_{vw} (U_v)^*U_w  =\sum_{v\in E^k}a_{vv}=0$. From now on, let us suppose that $|K|=s<k$. Then $E_K$ can be identified with the set $\left \{(z,x,y)\in E^s\times E^{k-s}\times E^{k-s}:x_j\neq y_j~\forall 1\leq j\leq k-s\right\}$ for each $K$. Under this notation, $(a_{vw})$ can be written as $(a^{K,z}_{x,y})$ and we have
\begin{align*}
\sum_{\substack{K\subset \left \{1,\cdots,k\right\}\\ |K|=s }} \norm{\sum_{(v,w)\in E_K}a_{vw} (U_v)^*U_w }\leq \sum_{\substack{K\subset \left \{1,\cdots,k\right\}\\ |K|=s }}\sum_{z\in E^s}\norm{\sum_{\substack{(x,y)\in E^{k-s}\times E^{k-s}\\ x_j\neq y_j}}a^{K,z}_{x,y} (U_x)^*U_y}.
\end{align*}

Moreover, Proposition \ref{thm1} gives us
\begin{align*}
&\leq \sum_{\substack{K\subset \left \{1,\cdots,k\right\}\\ |K|=s }}\sum_{z\in E^s} 3^{k-s}\left (\sum_{\substack{(x,y)\in E^{k-s}\times E^{k-s}\\ x_j\neq y_j}}|a^{K,z}_{x,y}|^2\right )^{\frac{1}{2}}
\end{align*}
and, since we are summing $N^s {k \choose s}$ elements, the Cauchy-Schwarz inequality tells us that
\begin{align*}
&\leq  3^{k-s} N^{\frac{s}{2}}{k \choose s}^{\frac{1}{2}} \left (\sum_{\substack{K\subset \left \{1,\cdots,k\right\}\\ |K|=s }}\sum_{z\in E^s}\sum_{\substack{(x,y)\in E^{k-s}\times E^{k-s}\\ x_j\neq y_j}}|a^{K,z}_{x,y}|^2\right )^{\frac{1}{2}}\\
 &= 3^{k-s}N^{\frac{s}{2}}{k \choose s}^{\frac{1}{2}}\left (\sum_{\substack{K\subset \left \{1,\cdots,k\right\}\\ |K|=s }}\sum_{(u,v)\in E_K} |a_{vw}|^2\right )^{\frac{1}{2}}.
\end{align*}

Here, the Haagerup constant $3^{k-s}$ appears due to $s$ cancellations. Applying the Cauchy-Schwartz inequality once more, 
we obtain that 

\begin{align*}
\norm{\sum_{v,w\in E^2} a_{vw}(U_v)^* U_w }&=\sum_{s=0}^{k-1} \sum_{\substack{K\subset \left \{1,\cdots,k\right\}\\ |K|=s }}\norm{\sum_{(v,w)\in E_K} a_{vw}(U_v)^*U_w}\\
&\leq \sum_{s=0}^{k-1} 3^{k-s}N^{\frac{s}{2}}{k \choose s}^{\frac{1}{2}}\left (\sum_{\substack{K\subset \left \{1,\cdots,k\right\}\\ |K|=s }}\sum_{(u,v)\in E_K} |a_{vw}|^2\right )^{\frac{1}{2}}\\
&\leq \left (  \sum_{s=0}^{k-1} 9^{k-s}N^s {k \choose s}\right )^{\frac{1}{2}}\left (\sum_{ K\subset \left \{1,\cdots,k\right\}}\sum_{(u,v)\in E_K} |a_{vw}|^2\right )^{\frac{1}{2}}\\
&=\sqrt{(N+9)^k-N^k}\norm{a}_2\\
&=N^{\frac{k}{2}}\sqrt{(1+9N^{-1})^k-1}\norm{a}_2.
\end{align*}

\end{proof}

\subsection{The regularized minimum output entropy of $\Phi_N$}\label{sec-reg.MOE}

Theorem \ref{thm2} enables us to show that for any density matrix $S$
\begin{equation}
\norm{(\Phi_N^c)^{\otimes k}(S)-\frac{1}{N^k}\mathrm{Id_N}^{\otimes k}}_2
\end{equation}
is sufficiently small with respect to the Hilbert-Schmidt norm. This generalizes \cite[Theorem 3.1]{Co18}.
Specifically, we prove the following theorem:

\begin{theorem}\label{thm3}
For each $k\in \n$, we have
\begin{equation}
\sup_{S}\norm{(\Phi_N^c)^{\otimes k}(S)-\frac{1}{N^k}\mathrm{Id}^{\otimes k}_N }_{2}\leq \frac{\sqrt{(1+9N^{-1})^k-1}}{N^{\frac{k}{2}}},
\end{equation}
where $S$ runs over all density matrices in $\mathcal{T}(\ell^2(\F_{\infty}^k))$.
\end{theorem}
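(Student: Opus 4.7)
The plan is to reduce the desired estimate to Theorem \ref{thm2} by a Hilbert--Schmidt duality argument. First I would unpack what $(\Phi_N^c)^{\otimes k}(S)$ looks like. The Kraus operators of $\Phi_N^{\otimes k}$ are $\frac{1}{N^{k/2}}U_v$ for $v \in E^k$, so from the definition in Section \ref{subsec-setup} the $(v,w)$ entry of $(\Phi_N^c)^{\otimes k}(S)$ is
\[
\bigl((\Phi_N^c)^{\otimes k}(S)\bigr)_{vw} \;=\; \frac{1}{N^k}\,\mathrm{tr}(U_v\,S\,U_w^*).
\]
Since each $U_v$ is unitary and $\mathrm{tr}(S)=1$, the diagonal entries are exactly $1/N^k$. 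Thus the matrix
\[
B \;:=\; (\Phi_N^c)^{\otimes k}(S)\,-\,\frac{1}{N^k}\,\mathrm{Id}_N^{\otimes k}
\]
has vanishing diagonal, with off-diagonal entries $B_{vw}=\frac{1}{N^k}\mathrm{tr}(U_v S U_w^*)$.

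Next I would apply Hilbert--Schmidt duality: $\|B\|_2 = \sup |\mathrm{tr}(A^*B)|$ over $A \in M_{N^k}(\mathbb{C})$ with $\|A\|_2 \le 1$. Because $B$ has zero diagonal, replacing $A$ by $A$ minus its diagonal does not change the pairing but can only decrease $\|A\|_2$, so without loss of generality $A$ itself has zero diagonal; in particular $\mathrm{tr}(A)=0$. Expanding,
\[
\mathrm{tr}(A^*B) \;=\; \frac{1}{N^k}\sum_{v,w\in E^k} \overline{A_{vw}}\,\mathrm{tr}(U_v S U_w^*) \;=\; \frac{1}{N^k}\,\mathrm{tr}(T\,S),
\]
where $T = \sum_{v,w} \overline{A_{vw}}\, U_w^* U_v$. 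Relabeling indices, $T$ is of the form $\sum a_{v'w'} U_{v'}^* U_{w'}$ with coefficient matrix $a$ obtained from $A$ by conjugation and transposition; crucially $\mathrm{tr}(a)=0$ and $\|a\|_2 = \|A\|_2$.

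Now I would invoke Theorem \ref{thm2} to get $\|T\| \le N^{k/2}\sqrt{(1+9N^{-1})^k-1}\,\|A\|_2$, together with the trace-duality bound $|\mathrm{tr}(TS)| \le \|T\|\cdot\|S\|_1 = \|T\|$. Combining,
\[
|\mathrm{tr}(A^*B)| \;\le\; \frac{\sqrt{(1+9N^{-1})^k-1}}{N^{k/2}}\,\|A\|_2,
\]
and taking the supremum over $\|A\|_2\le 1$ yields the claim. The content of the theorem is essentially all packed into Theorem \ref{thm2}, so there is no serious obstacle here beyond ensuring the duality-plus-unitarity reduction is performed correctly; the main care point is verifying that the off-diagonal restriction on $A$ remains compatible with the zero-trace hypothesis needed to invoke Theorem \ref{thm2}, and that the bookkeeping of indices $v,w$ versus $v',w'$ does not spoil the norm $\|a\|_2$.
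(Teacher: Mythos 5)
Your argument is correct and follows essentially the same route as the paper: both reduce the claim to Theorem \ref{thm2} via Hilbert--Schmidt/trace duality, using that the coefficient matrix has zero diagonal (hence zero trace) and that $|\mathrm{tr}(TS)|\le\|T\|\,\|S\|_1=\|T\|$. The only cosmetic difference is that the paper tests $B$ against itself, obtaining $\|X\|_2^2\le C\|X\|_2$ directly, whereas you take the supremum over all normalized $A$; the two are equivalent here.
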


\begin{proof}
Let $\displaystyle X=(x_{i,j})_{i,j\in I^k}=(\Phi_N^c)^{\otimes k}(S)-\frac{1}{N^k}\mathrm{Id}^{\otimes k}_N$. Since $\mathrm{tr}(X)=0$, we have
\begin{align*}
\mathrm{tr}(X^2)&=\mathrm{tr}((\Phi_N^c)^{\otimes k}(S)X)=\mathrm{tr}(S \left ((\Phi_N^c)^{\otimes k}\right )^*\left (X\right ))
\end{align*}
where $\left ( (\Phi_N^{c})^{\otimes k}\right )^*$ denotes the adjoint map of $(\Phi_N^{c})^{\otimes k}$. Moreover,
\[\left ((\Phi_N^c)^{\otimes k}\right )^*\left (X\right )=\frac{1}{N^k}\sum_{\substack{i,j\in I^k\\ i\neq j}}x_{i,j}U_i^*U_j\]
where $I=\left \{1,2,\cdots, N\right\}$. Hence, we have
\begin{align*}
\mathrm{tr}(S ((\Phi_N^c)^{\otimes k})^*(X))&\leq \frac{1}{N^k}\norm{\sum_{\substack{i,j\in I^k\\ i\neq j}}x_{i,j}U_i^*U_j}.
\end{align*}

According to Theorem \ref{thm2}, we get
\begin{equation}
\norm{X}_2^2=\mathrm{tr}(X^2)\leq \frac{N^{\frac{k}{2}}\sqrt{(1+9N^{-1})^k-1}\norm{X}_2}{N^k},
\end{equation}
as claimed.
\end{proof}
This allows us to estimate  the regularized minimum output entropies of $\Phi_N$  as follows:

\begin{theorem}\label{thm4}
For any $k\in \n$ we have
\[ H_{min}(\Phi_N^{\otimes k})\geq k\log(N) -2\log(1+\sqrt{(1+9N^{-1})^k-1}). \]
In particular, we have the following estimate for the regularized MOE
\[\overline{H}_{min}(\Phi_N)\geq \log(N)-\log(1+\frac{9}{N})\geq \log(N)-\frac{9}{N}.\]
\end{theorem}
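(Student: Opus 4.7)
The plan is to use the Schmidt decomposition (Proposition 2.1) to replace $\Phi_N^{\otimes k}$ by its complementary channel $(\Phi_N^c)^{\otimes k}$, which takes values in the finite-dimensional matrix algebra $M_{N^k}(\C)$, and then to convert the Hilbert--Schmidt bound of Theorem \ref{thm3} into a von Neumann entropy lower bound via the R\'enyi-$2$ entropy.

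Fix a unit vector $\xi \in \ell^2(\F_{\infty}^k)$ and set $\sigma := (\Phi_N^c)^{\otimes k}(|\xi\ra\la\xi|)$ and $\tau := \frac{1}{N^k}\mathrm{Id}^{\otimes k}_N$. Since $(\Phi_N^c)^{\otimes k}$ is the complement of $\Phi_N^{\otimes k}$ (up to reordering of environment tensor factors), Proposition 2.1 gives $H(\Phi_N^{\otimes k}(|\xi\ra\la\xi|)) = H(\sigma)$, so it suffices to lower bound $H(\sigma)$. Theorem \ref{thm3} combined with the triangle inequality in the Hilbert--Schmidt norm yields
\[
\norm{\sigma}_2 \leq \norm{\tau}_2 + \norm{\sigma-\tau}_2 \leq \frac{1}{N^{k/2}}+\frac{\sqrt{(1+9N^{-1})^k-1}}{N^{k/2}} = \frac{1+\sqrt{(1+9N^{-1})^k-1}}{N^{k/2}}.
\]
The monotonicity of R\'enyi entropies in $p$ (equivalently, Jensen's inequality applied to $-\log$) gives $H(\sigma) \geq -\log \mathrm{tr}(\sigma^2) = -2\log\norm{\sigma}_2$, so substituting the previous display and taking infimum over unit vectors $\xi$ yields the first claimed inequality.

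For the regularized bound, I divide by $k$ and let $k \to \infty$. For $k$ large enough that $(1+9/N)^k - 1 \geq 1$, the estimate $1+\sqrt{(1+9/N)^k-1} \leq 2(1+9/N)^{k/2}$ gives
\[
\frac{2}{k}\log\bigl(1+\sqrt{(1+9/N)^k-1}\bigr) \leq \frac{2\log 2}{k}+\log(1+9N^{-1}) \xrightarrow[k\to\infty]{} \log(1+9N^{-1}),
\]
whence $\overline{H}_{min}(\Phi_N) \geq \log N - \log(1+9N^{-1})$, and the elementary inequality $\log(1+x)\leq x$ completes the proof. There is no substantive obstacle once Theorem \ref{thm3} is in hand: one could instead invoke the sharper Pythagorean identity $\norm{\sigma}_2^2 = N^{-k}+\norm{\sigma-\tau}_2^2$ (valid because $\mathrm{tr}(\sigma-\tau)=0$) to obtain a quantitatively stronger one-shot bound, but this gives the same regularized constant $\log(1+9N^{-1})$, so the triangle-inequality version above is retained for simplicity.
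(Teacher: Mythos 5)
Your proposal is correct and follows essentially the same route as the paper: pass to the complementary channel via the Schmidt decomposition, combine Theorem \ref{thm3} with the triangle inequality to bound the Hilbert--Schmidt norm of the output, and convert this into a von Neumann entropy bound through the R\'enyi-$2$ entropy. The only (immaterial) differences are that the paper evaluates the $k\to\infty$ limit by L'H\^opital's rule where you use the elementary estimate $1+\sqrt{x-1}\leq 2\sqrt{x}$, and your remark about the Pythagorean identity is a valid but unused refinement.
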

\begin{proof}
Thanks to the fact that the von Neumann entropy is bigger than R{\' e}nyi entropies
of order $\alpha=2$ \cite{MuDuSzFeTo13}, we have
\begin{align*}
H((\Phi_N^c)^{\otimes k}(S))&\geq \frac{\alpha}{1-\alpha}\log \left (\left \| (\Phi_N^c)^{\otimes k}(S) \right \|_{\alpha}\right )\\
&\geq  -2\log \left ( N^{-\frac{k}{2}}(1+\sqrt{(1+9N^{-1})^k-1}) \right )\\
&=k\log(N)-2\log \left ( 1+\sqrt{(1+9N^{-1})^k-1} \right )
\end{align*}
for any density matrix $S\in \T(\ell^2(\F_{\infty}^k))$ by Theorem \ref{thm3}. In particular, we have
\[H_{min}(\Phi_N^{\otimes k})\geq k\log(N) -2\log(1+\sqrt{(1+9N^{-1})^k-1})\]
and the last conclusion follow from the following computation with L'H\^opital's rule:
\begin{align*}
&\lim_{k\rightarrow \infty}\frac{\log(1+\sqrt{(1+9N^{-1})^k-1})}{k}\\
&=\lim_{k\rightarrow \infty}\frac{\frac{(1+\frac{9}{N})^k\log(1+\frac{9}{N})}{2\sqrt{(1+\frac{9}{N})^k-1}}}{1+\sqrt{(1+\frac{9}{N})^k-1}}=\frac{1}{2}\log(1+\frac{9}{N}).
\end{align*}
\end{proof}

\section{Additivity violation of the regularized MOE}\label{sec4}

In this section, we choose two copies of $\Phi_N$ as $\Phi_{N,l}$ and $\Phi_{N,r}$. Indeed, these two quantum channels $\Phi_{N,l}$ and $\Phi_{N,r}$ are equivalent as explained in Section \ref{subsec-setup} and are in commuting-operator setup.

Then we can obtain the following additivity violation of the regularized MOE by generalizing Winter-Holevo-Hayden-Werner trick for $\Phi_{N,l}\circ \Phi_{N,r}$:

\begin{theorem}\label{thm:superadditive}
The regularized MOE is not additive: For any $N>e^{18}$, we have
\[\overline{H}_{min}(\Phi_{N,l}\circ \Phi_{N,r}) <\overline{H}_{min}(\Phi_{N,l})+\overline{H}_{min}(\Phi_{N,r}).\]
\end{theorem}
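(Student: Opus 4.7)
The plan is to upper-bound $\overline{H}_{min}(\Phi_{N,l}\circ\Phi_{N,r})$ by evaluating it on a single explicit pure state, and compare with the lower bound $2\log N-18/N$ for $\overline{H}_{min}(\Phi_{N,l})+\overline{H}_{min}(\Phi_{N,r})$ which follows at once from Theorem \ref{thm4} together with the equivalence $\overline{H}_{min}(\Phi_{N,l})=\overline{H}_{min}(\Phi_{N,r})$ noted in Section \ref{subsec-setup}. This is the commuting-operator analogue of the Winter--Holevo--Hayden--Werner trick: in the usual tensor setup one feeds a maximally entangled vector into $\Phi\otimes\bar\Phi$; here the natural surrogate is the delta function $\delta_e\in\ell^2(\F_\infty)$ at the neutral element, which plays the role of the tracial/vacuum vector for the group von Neumann algebra. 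Since $(\Phi_{N,l}\circ\Phi_{N,r})^{\otimes k}(|\delta_e\rangle\langle\delta_e|^{\otimes k})=\bigl((\Phi_{N,l}\circ\Phi_{N,r})(|\delta_e\rangle\langle\delta_e|)\bigr)^{\otimes k}$, the product input $\delta_e^{\otimes k}$ shows that any upper bound on the single-copy output entropy is also an upper bound on $\overline{H}_{min}(\Phi_{N,l}\circ\Phi_{N,r})$.

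The single-copy output is immediate from the definitions: $V_j\delta_e=\delta_{g_j^{-1}}$ and $U_i\delta_{g_j^{-1}}=\delta_{g_ig_j^{-1}}$, hence
\[
(\Phi_{N,l}\circ\Phi_{N,r})(|\delta_e\rangle\langle\delta_e|)=\frac{1}{N^2}\sum_{i,j=1}^{N}|\delta_{g_ig_j^{-1}}\rangle\langle\delta_{g_ig_j^{-1}}|.
\]
The decisive combinatorial input is the freeness of $\F_\infty$ on $\{g_i\}$: one has $g_ig_j^{-1}=e$ precisely when $i=j$, and the products $g_ig_j^{-1}$ with $i\neq j$ are pairwise distinct reduced words of length two. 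The output state is therefore diagonal with exactly two nonzero eigenvalues, $\tfrac{1}{N}$ with multiplicity $1$ (from the $N$ coinciding ``diagonal'' contributions) and $\tfrac{1}{N^2}$ with multiplicity $N(N-1)$. A direct computation gives
\[
H\bigl((\Phi_{N,l}\circ\Phi_{N,r})(|\delta_e\rangle\langle\delta_e|)\bigr)=\frac{\log N}{N}+\frac{2(N-1)\log N}{N}=2\log N-\frac{\log N}{N}.
\]

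Combining with $\overline{H}_{min}(\Phi_{N,l})+\overline{H}_{min}(\Phi_{N,r})\ge 2\log N-18/N$ from Theorem \ref{thm4}, the additivity violation reduces to the elementary inequality
\[
2\log N-\frac{\log N}{N}<2\log N-\frac{18}{N},
\]
i.e.\ $\log N>18$, which is exactly the hypothesis $N>e^{18}$.

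There is no real obstacle once the vacuum vector $\delta_e$ is identified as the correct surrogate for the maximally entangled state in the commuting-operator setting. The concentrated $\tfrac{1}{N}$-eigenvalue responsible for the entropy gap is produced entirely by the $N$ diagonal pairs $(i,i)$, whose separation from the off-diagonal terms is detected by freeness of $\F_\infty$; the remaining $N(N-1)$ off-diagonal pairs contribute pure decoherence. The only mildly delicate point is that, in contrast with Hastings's one-shot random construction, the quantitative constant $9/N$ from the Haagerup-type estimate of Proposition \ref{thm1} must beat the $\log N/N$ entropy deficit, which forces the threshold $N>e^{18}$; no further asymptotic work is required.
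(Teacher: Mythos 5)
Your proposal is correct and follows essentially the same route as the paper: feed the vacuum vector $\delta_e$ into $\Phi_{N,l}\circ\Phi_{N,r}$, use freeness to read off the spectrum $\{1/N\}\cup\{1/N^2 \text{ with multiplicity } N(N-1)\}$, obtain the one-shot upper bound $2\log N-\tfrac{\log N}{N}$, and compare with the lower bound $2\log N-\tfrac{18}{N}$ from Theorem \ref{thm4}. Your explicit justification that product inputs make the one-shot entropy an upper bound for the regularized quantity is a small point the paper leaves implicit, but the argument is the same.
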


\begin{proof}

Note that under notations from Subsection \ref{subsec-setup},
\[(\Phi_{N,l}\circ \Phi_{N,r})(\rho)=\frac{1}{N^2}\sum_{i,j=1}^N U_iV_j \rho V_j^*U_i^*.\]
Since $|e\ra\la e|$ is an invariant for $U_iV_i$, we have
\begin{align*}
(\Phi_{N,l}\circ \Phi_{N,r})(|e\ra\la e|)=\frac{1}{N}|e\ra\la e|+\frac{1}{N^2}\sum_{i,j:i\neq j} |g_ig_j^{-1}\ra\la g_ig_j^{-1}| ,
\end{align*}
which implies 
\begin{align*}
\overline{H}_{min}(\Phi_{N,l}\circ \Phi_{N,r})&\leq H_{min}(\Phi_{N,l}\circ \Phi_{N,r})\leq H((\Phi_{N,l}\circ \Phi_{N,r})(|e\ra\la e|))\\
=& \frac{\log(N)}{N}+(N^2-N)\cdot \frac{\log(N^2)}{N^2}= 2\log(N)-\frac{\log(N)}{N}.
\end{align*}

Moreover, $\Phi_{N,l}$ and $\Phi_{N,r}$ are copies of $\Phi_N$, so that we have
\[2\log(N)-\frac{\log(N)}{N}<2\log(N)-\frac{18}{N}\leq \overline{H}_{min}(\Phi_{N,l})+\overline{H}_{min}(\Phi_{N,r})\] 
by Theorem \ref{thm4} if $N> e^{18}$.
\end{proof}

\section{Concluding remarks}\label{sec5}

(1) Various versions of $C^*$-tensor products can be used to obtain commuting systems of operators. For example, let $A,B$ be 
unital $C^*$-algebras and take families of operators $(E_i)_{i=1}^m\subseteq A$ and $(F_j)_{j=1}^n\subseteq B$. Also, suppose 
that $A\otimes_{max}B\subseteq B(K)$. Then 
\[\left \{E_i\otimes 1_B\right\}_{i=1}^m~\mathrm{and~}\left \{1_A\otimes F_j\right\}_{j=1}^n\]
give us commuting systems of operators in $B(K)$. Moreover, if we suppose that 
$C_r^*(\F_{\infty})\otimes_{max}C_r^*(\F_{\infty})\subseteq B(K)$ where $C_r^*(\F_{\infty})$ is the reduced group $C^*$-algebra of 
the free group $\F_{\infty}$, then commuting systems $\left \{U_i\otimes \mathrm{Id}\right\}_i$ and 
$\left \{\mathrm{Id}\otimes U_j\right\}_j$ give another example of additivity violation in the commuting-operator setup.

(2) Since Haagerup type inequalities exist for other groups (e.g hyperbolic groups \cite{Ha88}) or certain reduced free products of 
$C^*$-algebras \cite{Bo91}, it is natural to expect that similar results should hold and will yield other examples of additivity violation phenomena.

(3) It is worthwhile to compare the main results of this paper and the cases of random unitary channels. On the side of random 
unitary channels, the regularized MOE is unknown, whereas our Theorem \ref{thm4} gives us a strong estimate for the 
regularized MOE of $\Phi_N$. 

(4) One might wonder if we can evaluate the classical capacity of $\Phi_N^c$ whose output space is finite dimensional. 
Thanks to Theorem \ref{thm4} and a standard argument, the classical capacity of $\Phi_N^c$ is upper bounded by 
\[C(\Phi_N^c)\leq \log(N)-\overline{H}_{min}(\Phi_N^c)\leq \frac{9}{N}.\]

However, unlike in the tensor-product setup \cite[Theorem 6.1]{Fu14}, it is not clear whether additivity violation of the regularized MOE implies super-additivity of the classical capacity within commuting-operator framework,
so the question of the additivity of the classical capacity remains open.

\bibliographystyle{alpha}
\bibliography{Collins-Youn-polished}

\end{document}